\documentclass[11pt]{article}
\usepackage[a4paper,margin=28mm]{geometry}
\usepackage{amsmath,amssymb,amsthm,aliascnt}
\usepackage{xcolor}
\usepackage{hyperref}
\hypersetup{colorlinks=true,linkcolor=cyan,citecolor=red,urlcolor=red,filecolor=red}
\numberwithin{equation}{section}

\newtheorem{theorem}{Theorem}[section]
\newaliascnt{conjecture}{theorem}
\newtheorem{conjecture}[conjecture]{Conjecture}
\aliascntresetthe{conjecture}
\newaliascnt{lemma}{theorem}
\newtheorem{lemma}[lemma]{Lemma}
\aliascntresetthe{lemma}
\newaliascnt{proposition}{theorem}
\newtheorem{proposition}[proposition]{Proposition}
\aliascntresetthe{proposition}

\DeclareMathOperator{\GF}{GF}
\DeclareMathOperator{\supp}{supp}
\DeclareMathOperator{\wt}{wt}

\begin{document}

\begin{center}
{\Large\bfseries
A counterexample to the circuit-cocircuit intersection conjecture
}
\end{center}

\begin{center}
Houshan Fu\par

School of Mathematics and Information Science, Guangzhou University\\
Guangzhou 510006, Guangdong, P. R. China\par

Email address: fuhoushan@gzhu.edu.cn
\end{center}

\begin{abstract}
Oxley conjectured that a matroid with a circuit-cocircuit intersection of
size $k\geq 4$ has one of size $k-2$.
Using the standard polynomial representation of the classical extended
binary Golay code, we construct an identically self-dual binary matroid of
rank $12$ on $24$ elements. Its circuit-cocircuit intersection sizes are
exactly $0,2,4,6,8$, and $12$, disproving the conjecture for $k=12$.
The proof is self-contained and does not rely on the full weight distribution
of the code or on an enumeration of all circuits.
\par\smallskip
\noindent\textbf{Keywords:} Binary matroid; circuit; cocircuit; circuit-cocircuit intersection 
\par\smallskip
\noindent\textbf{Mathematics Subject Classification:}  05B35
\end{abstract}

\section{Introduction}
All matroids considered are finite, and the matroid terminology follows
Oxley~\cite{Oxley2011}. For a matroid $M$, write $E(M)$, $r(M)$, and $M^*$
for its ground set, rank, and dual, respectively. Denote its families of
circuits and cocircuits by $\mathcal C(M)$ and
$\mathcal C^*(M)=\mathcal C(M^*)$, respectively.
A matroid $M$ is \emph{identically self-dual} if $M=M^*$.
A matroid is \emph{binary} if it is representable over the field
$\GF(2)$ with two elements.

A \emph{circuit-cocircuit intersection} is the intersection of a circuit
and a cocircuit of the same matroid. No circuit-cocircuit intersection has
size one~\cite[Proposition~2.1.11]{Oxley2011}. A matroid is binary if and
only if every circuit-cocircuit intersection has even size
\cite[Theorem~9.1.2]{Oxley2011}.
Oxley~\cite[Theorem~2.7]{Oxley1984} proved that a matroid with a circuit-cocircuit
intersection of size at least four has one of size four.
The following conjecture asks whether the intersection size can always
be reduced by two.

\begin{conjecture}[{\cite[Conjecture~15.9.5]{Oxley2011}}]
\label{conj:oxley}
Let $M$ be a matroid with a $k$-element circuit-cocircuit intersection,
where $k\geq4$.
Then $M$ has a $(k-2)$-element circuit-cocircuit intersection.
\end{conjecture}

Recently, Shin proved the cases $k\leq6$ for arbitrary matroids
in \cite[Theorem~2.2]{Shin2021} and settled the case $k=7$
in \cite[Theorem~2.7]{Shin2023}.
Earlier, Kingan~\cite[Theorem~4.2]{Kingan1999} established
\autoref{conj:oxley} for graphic matroids.
Kingan and Lemos~\cite[Corollary~1.2]{KinganLemos2006} subsequently proved
a stronger result for regular matroids: every element of a $k$-element
circuit-cocircuit intersection, where $k\geq4$, belongs to one of size $k-2$.

The initial motivation for this work was to determine whether
\autoref{conj:oxley} holds for the broader class of binary matroids.
Within this class, Kingan~\cite[Theorem~2.3]{Kingan1999} proved that a matroid
with a circuit-cocircuit intersection of size at least six has one of size six.
Kingan and Lemos~\cite[pp.~472--473]{KinganLemos2006} used self-dual
binary codes to construct matroids in their study of this conjecture.
Their paper also contains a rank-nine example due to Oxley with circuit-cocircuit
intersections of sizes ten and eight~\cite[pp.~473, 480]{KinganLemos2006}.

The matroid represented by a generator matrix of the classical extended
binary Golay code disproves the conjecture within the class of binary
matroids: it has a twelve-element circuit-cocircuit intersection but none
of size ten.

\begin{theorem}\label{thm:main}
There is an identically self-dual binary matroid $M$ of rank $12$ on $24$
elements such that
\[
 \bigl\{|C\cap C^*|:
 C\in\mathcal C(M),\ C^*\in\mathcal C^*(M)\bigr\}
 =\{0,2,4,6,8,12\}.
\]
In particular, $M$ has a $12$-element circuit-cocircuit intersection and
no $10$-element circuit-cocircuit intersection.
\end{theorem}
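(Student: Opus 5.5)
Let $G$ be the extended binary Golay code, a $[24,12,8]$ self-dual doubly-even code (all weights in $\{0,8,12,16,24\}$), and let $M=M[A]$ for a $12\times24$ generator matrix $A$ of $G$. I will use the standard facts for a binary matroid represented by a code $\mathcal G$: the cocircuits of $M[A]$ are exactly the minimal nonempty supports of codewords of $\mathcal G$, and the circuits are exactly the minimal nonempty supports of codewords of $\mathcal G^\perp$. Since $G=G^\perp$, we get $\mathcal C(M)=\mathcal C^*(M)$, and as $M$ and $M^*$ are both binary, $M=M^*$. So $M$ is identically self-dual of rank $12$ on $24$ elements. Every circuit-cocircuit intersection has even size by \cite[Theorem~9.1.2]{Oxley2011}. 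Let $\mathcal S$ be the set of minimal supports. A support of weight $8$ is minimal because $d=8$. A support of weight $12$ is also minimal: a nonzero codeword with strictly smaller support would have weight $8$, and the difference would then have weight $4$, which is impossible. A word of weight $16$ is the complement of an octad, and is the sum of two disjoint octads, so it is not minimal. The all-ones word is not minimal either. Hence $\mathcal S$ consists of the octads and the dodecads.

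\textbf{Upper bound.} For $C,D\in\mathcal S$ I will bound $|C\cap D|$. Since $G$ is self-orthogonal, $|C\cap D|$ is even. I also have $|C\triangle D|=|C|+|D|-2|C\cap D|$, and this must lie in $\{0,8,12,16,24\}$. For two octads this gives $|C\cap D|\in\{0,2,4,8\}$. For an octad and a dodecad, $20-2t\in\{8,12,16\}$ gives $t\in\{2,4,6\}$. For two dodecads, $24-2t\in\{0,8,12,16,24\}$ gives $t\in\{0,4,6,8,12\}$; here $t=0$ is allowed (complementary dodecads) and $t=2$ is excluded. In every case $t\neq10$, so $10$ never occurs. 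Every realized size therefore lies in $\{0,2,4,6,8,12\}$.

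\textbf{Realization.} Next I will exhibit every value in $\{0,2,4,6,8,12\}$. The value $12$ comes from $C=D$ a dodecad, and $8$ from $C=D$ an octad. The value $0$ comes from two disjoint octads, for example two halves of a sextet, or from complementary dodecads. For $2$, $4$ and $6$, the plan is to exhibit explicit codewords in the polynomial (cyclic) representation. Write $G$ as the extended quadratic-residue/cyclic code of length $23$ generated by $g(x)=1+x^2+x^4+x^5+x^6+x^{10}+x^{11}$, with a parity coordinate $\infty$.
\begin{enumerate}
\item The weight-$8$ word $(1+x)g(x)$, its shifts, and extensions give octads. Two shifts meeting in $2$ or $4$ positions realize those sizes.
\item For $6$, I need an octad and a dodecad meeting in $6$ points. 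The dodecad can be taken as the sum of two octads meeting in $2$ points: that sum has weight $8+8-4=12$.
\end{enumerate}
The explicit checks are routine support computations. The main obstacle is presenting them cleanly without the full weight enumerator, but only the weight set $\{0,8,12,16,24\}$ is needed, and that follows from $G$ being doubly even with $d=8$. I would cite this as standard, or verify it from the generator polynomial.
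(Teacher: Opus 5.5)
Your proposal is correct and follows essentially the paper's route: circuits and cocircuits are the octad and dodecad supports, size $10$ is excluded because two dodecads meeting in ten points would have a weight-$4$ symmetric difference, and the remaining sizes come from two shifted octads, their sum (a dodecad meeting one of them in six points), and a complementary pair of dodecads. Two small points need attention. First, your claim that every weight-$16$ word is a sum of two disjoint octads uses more Golay structure than the weight set you say suffices, whereas the paper's bound $|C|\le r(M)+1=13$ rules out weights $16$ and $24$ at once. Second, the realizing shifts should be named, and they exist: with your generator (the paper's $h$), the exponent set of $(1+x)h(x)$ is $S=\{0,1,2,3,4,7,10,12\}$, and $|S\cap(S+1)|=4$, $|S\cap(S+4)|=2$.
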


The proof uses the code's duality and weight properties to characterize
the circuits and cocircuits as supports of vectors of weight eight or twelve.
If two such supports meet in ten elements, both have size twelve and their
symmetric difference is the support of a vector of weight four. Such a
vector contradicts the minimum weight eight. The required code properties
are proved directly from the standard polynomial representation.

\section{Construction and weight bounds}

Let $E$ be the set of column labels of a matrix $A$ over a field $\mathbb{F}$.
For each $e\in E$, write $v_e$ for the column labelled by $e$.
Set
\[
 \mathcal I=\{I\subseteq E:(v_e)_{e\in I}
 \text{ is linearly independent over }\mathbb{F}\}.
\]
Then $M[A]=(E,\mathcal I)$ is a matroid, called the \emph{vector matroid}
of $A$~\cite[Proposition~1.1.1]{Oxley2011}.
The \emph{row space} $R(A)$ is the subspace spanned over $\mathbb{F}$ by the rows of $A$.
For each positive integer $n$, write $V(n,2)=\GF(2)^n$.
The \emph{symmetric difference} of sets $X$ and $Y$ is
$X\mathbin{\triangle}Y=(X-Y)\cup(Y-X)$.

For a vector $u=(u_e)_{e\in E}$ over $\GF(2)$, its \emph{support} is
$\supp(u)=\{e\in E:u_e\ne0\}$, and its \emph{weight} is
$\wt(u)=|\supp(u)|$.
The \emph{incidence vector} of $X\subseteq E$ is the vector with
$e$-coordinate one for $e\in X$ and zero for $e\in E-X$.
Orthogonality is taken with respect to the usual dot product over $\GF(2)$.

The construction follows the polynomial representation of the extended
binary Golay code in Huffman and Pless~\cite[Section~10.1.2]{HuffmanPless2003}.
The binary Golay code of length $23$ has generator polynomial
\begin{equation}\label{eq:g}
 g(x)=1+x+x^5+x^6+x^7+x^9+x^{11}\in\GF(2)[x].
\end{equation}
The reciprocal polynomial of $g$ is
\[
 h(x)=x^{11}g(x^{-1})=1+x^2+x^4+x^5+x^6+x^{10}+x^{11}.
\]
Direct multiplication in $\GF(2)[x]$ gives
\begin{equation}\label{eq:product}
 g(x)h(x)=1+x+x^2+\cdots+x^{22}.
\end{equation}
For $0\leq i\leq11$ and $0\leq j\leq22$, define $a_{ij}\in\GF(2)$ by
\begin{equation}\label{eq:row-coefficients}
 x^i g(x)=\sum_{j=0}^{22}a_{ij}x^j.
\end{equation}
For $0\leq i\leq11$, put
$u_i=(a_{i0},a_{i1},\ldots,a_{i22},1)\in V(24,2)$.
Let $A$ be the $12\times24$ matrix over $\GF(2)$ with rows
$u_0,u_1,\ldots,u_{11}$ and columns labelled $0,1,\ldots,23$ in this order.
Set $E=\{0,1,\ldots,23\}$ and $M=M[A]$.
The matrix $A$ and the matroid $M$ are fixed for the remainder of the paper.
For $0\leq i\leq11$, the polynomial $x^i g(x)$ has seven nonzero
coefficients. The appended entry one is the overall parity-check
coordinate, and $\wt(u_i)=8$.
The resulting row space $R(A)$ is the extended binary Golay code,
with generator matrix $A$ and associated matroid $M[A]$.
The \emph{orthogonal subspace} of $R(A)$ is
\[
 R(A)^\perp=\{v\in V(24,2):u\cdot v=0\text{ for every }u\in R(A)\}.
\]

The next lemma uses the dot products of the rows to control the weights
of their sums.

\begin{lemma}\label{lem:orthogonal}
The row space $R(A)$ has dimension $12$ and satisfies
\[
 R(A)=R(A)^\perp.
\]
Moreover, every vector in $R(A)$ has weight divisible by four.
\end{lemma}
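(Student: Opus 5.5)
The plan has three steps: show the rows are independent, show they are pairwise orthogonal, and then get self-duality by a dimension count and divisibility by four from an inclusion--exclusion identity.

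\emph{Dimension.} I would show that $u_0,\dots,u_{11}$ are linearly independent. The first $23$ coordinates of $u_i$ are the coefficients of $x^ig(x)$. A nontrivial relation would therefore give a nonzero polynomial $c(x)$ of degree at most $11$ with $c(x)g(x)=0$ in $\GF(2)[x]$. This is impossible, because $\GF(2)[x]$ is an integral domain. (Alternatively, the rows form a triangular pattern, since $x^ig$ has lowest term $x^i$.) Hence $\dim R(A)=12$.

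\emph{Self-orthogonality.} It suffices to check $u_i\cdot u_j=0$ for all $0\le i\le j\le 11$. Each $u_i$ has weight $8$, so $u_i\cdot u_i=0$. For $i<j$, the appended coordinate contributes $1$. The first $23$ coordinates contribute the number of common exponents of $x^ig$ and $x^jg$, taken mod $2$. That number is the coefficient of $x^{j-i}$ in $g(x)\cdot x^{11}g(x^{-1})$ read as a correlation, i.e.\ the coefficient of $x^{11+(j-i)}$ in $g(x)h(x)$. By \eqref{eq:product}, $g(x)h(x)=1+x+\cdots+x^{22}$. Since $11+(j-i)\le 22$, that coefficient is $1$. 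I need to double-check this indexing: the correlation $\sum_k g_kg_{k+d}$ equals the coefficient of $x^{11+d}$ in $g(x)h(x)$, because $h_m=g_{11-m}$. So each pair of distinct rows shares an odd number of positions among the first $23$, plus one in the last, giving dot product $0$. Bilinearity then gives $R(A)\subseteq R(A)^\perp$. Since $\dim R(A)^\perp=24-12=12$, we get equality.

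\emph{Divisibility by four.} I would use the identity
\[
\wt(u+v)=\wt(u)+\wt(v)-2|\supp(u)\cap\supp(v)|.
\]
If $u,v\in R(A)$, orthogonality says $|\supp(u)\cap\supp(v)|$ is even, so $\wt(u+v)\equiv \wt(u)+\wt(v)\pmod 4$. The generators all have weight $8\equiv0\pmod 4$. Induction on the number of rows in a sum then shows every vector of $R(A)$ has weight divisible by four.

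\emph{Main obstacle.} The only delicate step is the pairwise orthogonality calculation. One has to get the correlation interpretation of $g(x)h(x)$ right: that the relevant coefficient is $x^{11+d}$ with $0<d\le 11$, hence within degree $22$ and equal to $1$. If the indexing turns out awkward, the fallback is to verify the twelve shifts $d=1,\dots,11$ directly from the exponent set $\{0,1,5,6,7,9,11\}$. That check amounts to confirming that every difference $1,\dots,11$ occurs an odd number of times among ordered pairs of exponents.
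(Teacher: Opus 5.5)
Your proposal is correct and follows the paper's proof essentially step for step. Both arguments use the same four ingredients. Independence comes from $\GF(2)[x]$ being an integral domain. Pairwise orthogonality comes from $g(x)h(x)=1+x+\cdots+x^{22}$: the paper reads the relevant coefficient as the constant term of $x^{i-j}g(x)g(x^{-1})$, which is the same quantity as your coefficient of $x^{11+(j-i)}$ in $g(x)h(x)$. Equality follows from the dimension count. Divisibility by four follows from $\wt(u+v)=\wt(u)+\wt(v)-2|\supp(u)\cap\supp(v)|$.

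The differences are cosmetic. You handle $u_i\cdot u_i$ separately via $\wt(u_i)=8$, whereas the paper's constant-term computation covers $i=j$ uniformly. Also, your fallback check involves eleven shifts $d=1,\dots,11$, not twelve.
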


\begin{proof}
We first show that $u_0,\ldots,u_{11}$ form a basis of $R(A)$.
They span $R(A)$ by definition.
Suppose $\sum_{i=0}^{11}c_i u_i=0$, where $c_i\in\GF(2)$.
In the first $23$ coordinates, this relation reads
$\sum_{i=0}^{11}c_i a_{ij}=0$ for $0\leq j\leq22$.
Multiplication by $x^j$ and summation over $0\leq j\leq22$, together with
\eqref{eq:row-coefficients}, transform these coordinate relations into
\[
 0=\sum_{j=0}^{22}\Bigl(\sum_{i=0}^{11}c_i a_{ij}\Bigr)x^j
  =\sum_{i=0}^{11}c_i x^i g(x)
  =g(x)\sum_{i=0}^{11}c_i x^i.
\]
Since $g(x)\ne0$ and $\GF(2)[x]$ is an integral domain,
$\sum_{i=0}^{11}c_i x^i=0$. Comparing coefficients shows that
$c_i=0$ for every $i$.
Thus $u_0,\ldots,u_{11}$ form a basis of $R(A)$, and $\dim R(A)=12$.

To prove orthogonality of the rows, substitute $h(x)=x^{11}g(x^{-1})$ into
\eqref{eq:product} and divide by $x^{11}$. The resulting identity in the
Laurent polynomial ring $\GF(2)[x,x^{-1}]$ is
\[
 g(x)g(x^{-1})=\sum_{d=-11}^{11}x^d.
\]
For $0\leq i,j\leq11$, multiply \eqref{eq:row-coefficients} for $i$
by the same identity for $j$ with $x$ replaced by $x^{-1}$:
\[
 \sum_{k,l=0}^{22}a_{ik}a_{jl}x^{k-l}
 =x^{i-j}g(x)g(x^{-1})
 =\sum_{d=-11}^{11}x^{d+i-j}.
\]
On the left, the constant terms are precisely those with $k=l$.
On the right, only $d=j-i$ contributes a constant term, and
$-11\leq j-i\leq11$.
Hence $\sum_{k=0}^{22}a_{ik}a_{jk}=1$. Since the last coordinate of each row is one,
$u_i\cdot u_j=1+1=0$.
By bilinearity, any two linear combinations of the rows are orthogonal.
Thus
\begin{equation}\label{eq:self-orthogonal}
 R(A)\subseteq R(A)^\perp.
\end{equation}
Since $\dim R(A)^\perp=24-\dim R(A)=12$, the inclusion
\eqref{eq:self-orthogonal} is an equality.

To prove the divisibility assertion, consider $u,v\in V(24,2)$.
Binary addition corresponds to symmetric difference of supports:
\begin{equation}\label{eq:binary-support}
 \supp(u+v)=\supp(u)\mathbin{\triangle}\supp(v).
\end{equation}
Consequently, the weights satisfy
\begin{equation}\label{eq:weight}
 \wt(u+v)=\wt(u)+\wt(v)-2|\supp(u)\cap\supp(v)|.
\end{equation}
For $u,v\in R(A)$, \eqref{eq:self-orthogonal} ensures $u\cdot v=0$.
In $\GF(2)$,
\[
 0=u\cdot v=\sum_{e\in E}u_ev_e
 =\sum_{e\in\supp(u)\cap\supp(v)}1.
\]
Thus $|\supp(u)\cap\supp(v)|$ is even, and \eqref{eq:weight} implies
\begin{equation}\label{eq:weight-congruence}
 \wt(u+v)\equiv\wt(u)+\wt(v)\pmod4.
\end{equation}
For an arbitrary $u\in R(A)$, the basis coefficients lie in $\GF(2)$,
so $u=\sum_{i\in I}u_i$ for some $I\subseteq\{0,\ldots,11\}$.
Every partial sum belongs to $R(A)$. Repeated application of
\eqref{eq:weight-congruence}, using $\wt(u_i)=8$, gives
\[
 \wt(u)\equiv\sum_{i\in I}\wt(u_i)=8|I|\equiv0\pmod4.
\]
The empty sum corresponds to the zero vector and has weight zero.
\end{proof}

The divisibility by four reduces the minimum-weight bound to the exclusion
of weight four. For this step, the proof uses the consecutive-root argument
for cyclic codes described by Huffman and
Pless~\cite[Theorem~4.5.3 and Example~4.5.5]{HuffmanPless2003}, including its
application to the binary Golay code.

\begin{lemma}\label{lem:weight}
Every nonzero vector in $R(A)$ has weight at least eight.
\end{lemma}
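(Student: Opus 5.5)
The plan is to exclude weights $1,\dots,7$ by reducing to a statement about the cyclic code of length $23$. By \autoref{lem:orthogonal}, every weight in $R(A)$ is divisible by four, so it suffices to show that no $u\in R(A)$ has weight exactly $4$.

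\textbf{Translation to polynomials.} Write $u=\sum_{i\in I}u_i$ and set $a(x)=\sum_{i\in I}x^i$. By \eqref{eq:row-coefficients}, the first $23$ coordinates of $u$ are the coefficients of $c(x)=a(x)g(x)$, which has degree at most $22$. The last coordinate is $|I|\bmod 2=a(1)$. Since $g(1)=1$, we have $c(1)=a(1)$, so the parity of $\wt(c)$ equals the last coordinate. Hence $\wt(u)=4$ forces one of two cases: $\wt(c)=4$ with last coordinate $0$, or $\wt(c)=3$ with last coordinate $1$. It therefore suffices to show that no nonzero multiple $c=ag$ of degree $<23$ has weight $\le 4$. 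Note that $c\neq 0$ because $a\neq0$.

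\textbf{Roots of $g$.} From \eqref{eq:product}, $(x+1)g(x)h(x)=x^{23}+1$. Let $\beta$ be a primitive $23$rd root of unity in $\GF(2^{11})$; such a $\beta$ exists because $2^{11}\equiv1\pmod{23}$, and $2$ has order $11$ modulo $23$. The cyclotomic cosets of $2$ modulo $23$ are $\{0\}$, $C_1=\{1,2,4,8,16,9,18,13,3,6,12\}$ and $C_{-1}=-C_1$. So $x^{23}+1$ factors as $(x+1)$ times two irreducible factors of degree $11$. Since $g(1)=1$, the polynomial $g$ has degree $11$, and $g$ divides $x^{23}+1$, it must be one of these two irreducible factors. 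Its roots are therefore $\{\beta^s:s\in C_1\}$ or $\{\beta^s:s\in C_{-1}\}$. In either case there is some $b$ such that $\beta^b,\beta^{b+1},\beta^{b+2},\beta^{b+3}$ are roots of $g$: take $b=1$ (exponents $1,2,3,4\in C_1$) or $b=-4$ (exponents $-4,-3,-2,-1\in C_{-1}$).

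\textbf{Vandermonde step (main obstacle).} Suppose $c(x)=\sum_{t=1}^{w}x^{j_t}$ with $w\le4$ and distinct exponents $0\le j_t\le22$. Then $c(\beta^{b+r})=0$ for $r=0,1,2,3$. This gives the linear system
\[
 \sum_{t=1}^{w}\beta^{bj_t}\,(\beta^{j_t})^{r}=0,\qquad 0\le r\le w-1,
\]
in the nonzero unknowns $\beta^{bj_t}$. The coefficient matrix is the Vandermonde matrix in $\beta^{j_1},\dots,\beta^{j_w}$. These are distinct because $\beta$ has order $23$ and the $j_t$ are distinct modulo $23$, so the matrix is nonsingular. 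This forces every $\beta^{bj_t}=0$, which is impossible. Hence every nonzero multiple of $g$ of degree less than $23$ has weight at least $5$, so $R(A)$ has no vector of weight $4$, and the lemma follows from the divisibility by four.

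The delicate part is making the field-theoretic facts rigorous: that $g$ is exactly one irreducible factor with roots indexed by a full coset containing four consecutive exponents. Checking the coset $C_1$ and the order of $2$ modulo $23$ is routine.
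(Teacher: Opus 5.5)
Your proof is correct and follows the same route as the paper. Both arguments reduce to excluding weight four via divisibility by four, view the first $23$ coordinates as a nonzero multiple of $g$, and use four consecutive powers of a primitive $23$rd root of unity among the roots of $g$ to get a nonsingular Vandermonde system. The one difference is that the paper takes $\alpha$ to be a root of $g$ itself and obtains $\alpha,\alpha^2,\alpha^4$ and $\alpha^{2^8}=\alpha^3$ directly from the Frobenius identity. That avoids your factorization of $x^{23}+1$ into irreducible factors and the two-case choice $b=1$ or $b=-4$.
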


\begin{proof}
By \autoref{lem:orthogonal}, a nonzero vector in $R(A)$ can have
weight less than eight only if its weight is four.
Suppose, for a contradiction, that $u\in R(A)$ has weight four.
Since $u_0,u_1,\ldots,u_{11}$ form a basis of $R(A)$, write
$u=\sum_{i=0}^{11}c_i u_i$ with $c_i\in\GF(2)$.
Let $f(x)$ be the polynomial whose coefficients are the first $23$
coordinates of $u$.
For $0\leq j\leq22$, the coefficient of $x^j$ in $f$ is
$\sum_{i=0}^{11}c_i a_{ij}$. Substitution from \eqref{eq:row-coefficients}
expresses $f$ as a multiple of $g$:
\begin{equation}\label{eq:multiple}
 f(x)=\sum_{j=0}^{22}\Bigl(\sum_{i=0}^{11}c_i a_{ij}\Bigr)x^j=\sum_{i=0}^{11}c_i\Bigl(\sum_{j=0}^{22}a_{ij}x^j\Bigr)=\Bigl(\sum_{i=0}^{11}c_i x^i\Bigr)g(x).
\end{equation}
Deleting the last coordinate of $u$ removes either zero or one nonzero
entry. Since $\wt(u)=4$, the polynomial $f$ has $t\in\{3,4\}$ nonzero
coefficients. Denote their positions by $0\leq e_1<\cdots<e_t\leq22$.
Since each nonzero coefficient equals one in $\GF(2)$, the expansion is
\begin{equation}\label{eq:sparse-expansion}
 f(x)=\sum_{l=1}^{t}x^{e_l}.
\end{equation}

Since $g$ divides $f$ by \eqref{eq:multiple}, the roots of $g$ impose
linear relations on the $t$ nonzero coefficients of $f$. The contradiction
follows once $t$ of these relations form a nonsingular homogeneous system.

Let $\alpha$ be a root of $g$ in an algebraic closure of $\GF(2)$.
Multiplying \eqref{eq:product} by $x-1$ gives $(x-1)g(x)h(x)=x^{23}-1$.
At $x=\alpha$, the left-hand side vanishes, so $\alpha^{23}=1$.
The equality $g(1)=1$ excludes $\alpha=1$. As $23$ is prime,
the multiplicative order of $\alpha$ is $23$.
In characteristic two, $(a+b)^{2^j}=a^{2^j}+b^{2^j}$ for all elements
$a,b$ of the algebraic closure and every integer $j\geq0$.
Since each coefficient of $g$ is zero or one, this identity implies
$g(\alpha^{2^j})=g(\alpha)^{2^j}=0$ for every integer $j\geq0$.
In particular, $\alpha,\alpha^2$, and $\alpha^4$ are roots of $g$.
Since $2^8\equiv3\pmod{23}$ and $\alpha^{23}=1$,
$\alpha^{2^8}=\alpha^3$.
Thus, $\alpha,\alpha^2,\alpha^3,\alpha^4$ are roots of both $g$ and $f$.
For $1\leq j\leq t\leq4$, evaluation of \eqref{eq:sparse-expansion}
at $\alpha^j$ gives
\[
 0=f(\alpha^j)=\sum_{l=1}^{t}\alpha^{je_l}
 \qquad (1\leq j\leq t).
\]
Equivalently,
\begin{equation}\label{eq:root-equations}
 \bigl(\alpha^{je_l}\bigr)_{1\leq j,l\leq t}
 \begin{pmatrix}1\\\vdots\\1\end{pmatrix}
 =\begin{pmatrix}0\\\vdots\\0\end{pmatrix}.
\end{equation}
Since $\alpha^{23}=1$, both $\alpha\ne0$ and
$\alpha^{e_l}\ne0$ for $1\leq l\leq t$.
For $l,m\in\{1,\ldots,t\}$, if $\alpha^{e_l}=\alpha^{e_m}$, then
$23\mid(e_l-e_m)$ because $\alpha$ has multiplicative order $23$.
The bound $|e_l-e_m|<23$ forces $e_l=e_m$, and the strict ordering of
the exponents implies $l=m$. Thus, $\alpha^{e_1},\ldots,\alpha^{e_t}$
are pairwise distinct.
After $\alpha^{e_l}$ is factored from each column $l$, the remaining
matrix is a Vandermonde matrix. The original determinant factors as
\[
 \det\bigl(\alpha^{je_l}\bigr)_{1\leq j,l\leq t}=\Bigl(\prod_{l=1}^{t}\alpha^{e_l}\Bigr)\det\bigl(\alpha^{(j-1)e_l}\bigr)_{1\leq j,l\leq t}=\Bigl(\prod_{l=1}^{t}\alpha^{e_l}\Bigr)
   \prod_{1\leq l<m\leq t}(\alpha^{e_m}-\alpha^{e_l})\ne0.
\]
The coefficient matrix is nonsingular. Left multiplication by its inverse
reduces \eqref{eq:root-equations} to $(1,\ldots,1)^T=0$,
a contradiction since $t\geq1$.
\end{proof}

\section{Circuit-cocircuit intersections}

The \emph{circuit space} and \emph{cocircuit space} of $M$ are the subspaces
of $V(24,2)$ spanned over $\GF(2)$ by the
incidence vectors of its circuits and cocircuits, respectively.
For the binary representation $A$, these spaces are $R(A)^\perp$ and
$R(A)$, respectively~\cite[Proposition~9.2.2]{Oxley2011}.
The cocircuits are the inclusion-minimal nonempty supports of vectors in
$R(A)$~\cite[Proposition~9.2.4]{Oxley2011}. By duality, the circuits are
the inclusion-minimal nonempty supports of vectors in $R(A)^\perp$.
The rank bound on circuit size, together with the weight constraints in
\autoref{lem:orthogonal} and \autoref{lem:weight}, determines these minimal supports.

\begin{proposition}\label{prop:circuits}
The matroid $M$ is identically self-dual, has rank $12$, and satisfies
\[
 \mathcal C(M)=\mathcal C^*(M)
 =\{\supp(u):u\in R(A),\ \wt(u)\in\{8,12\}\}.
\]
\end{proposition}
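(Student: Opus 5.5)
Since $R(A)^\perp=R(A)$ by \autoref{lem:orthogonal}, the circuit space and the cocircuit space of $M$ coincide. The circuits are the inclusion-minimal nonempty supports of vectors in $R(A)^\perp$, and the cocircuits are the inclusion-minimal nonempty supports of vectors in $R(A)$, so these two families of minimal supports are literally the same. Hence $\mathcal C(M)=\mathcal C^*(M)$. A matroid is determined by its circuits, and $\mathcal C(M^*)=\mathcal C^*(M)=\mathcal C(M)$, so $M=M^*$. The rank is $r(M)=\dim R(A)=12$ by \autoref{lem:orthogonal}, because the rank of $M[A]$ equals the rank of $A$. It then remains to identify the minimal nonempty supports of vectors in $R(A)$ with $\{\supp(u):u\in R(A),\ \wt(u)\in\{8,12\}\}$.

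\emph{Upper bound on size.} Every circuit $C$ satisfies $|C|\le r(M)+1=13$, since $C-e$ is independent for any $e\in C$. A circuit is the support of some vector in $R(A)$. By \autoref{lem:orthogonal} and \autoref{lem:weight}, that vector has weight in $\{8,12,16,20,24\}$, and the bound $|C|\le13$ forces weight $8$ or $12$. So every circuit lies in the claimed set.

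\emph{Converse.} Let $u\in R(A)$ have weight $8$ or $12$, and suppose $\supp(u)$ is not minimal. Then some nonzero $v\in R(A)$ satisfies $\supp(v)\subsetneq\supp(u)$. By \eqref{eq:binary-support}, $\supp(u+v)=\supp(u)-\supp(v)$, which is nonempty. So $\wt(v)+\wt(u+v)=\wt(u)$, with both summands nonzero elements of $\{8,12,\dots\}$. Since $\wt(u)\le12$, the sum is at most $12$, while two nonzero weights each at least $8$ sum to at least $16$. This contradiction shows $\supp(u)$ is a minimal nonempty support, hence a cocircuit, and therefore also a circuit.

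The whole argument is routine given the two lemmas. The only point needing care is the justification that $\mathcal C(M)$ is exactly the set of minimal supports in $R(A)^\perp$, together with the rank bound $|C|\le r(M)+1$. Both follow from the cited results \cite[Propositions~9.2.2 and 9.2.4]{Oxley2011}, combined with independence of $C-e$.
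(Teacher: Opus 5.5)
Your proposal is correct and follows the paper's proof essentially step for step. Both arguments obtain self-duality because $R(A)=R(A)^\perp$ makes the two families of minimal supports identical, use $|C|\le r(M)+1=13$ together with divisibility by four and minimum weight eight to force $|C|\in\{8,12\}$, and prove minimality by the partition $\wt(u)=\wt(v)+\wt(u+v)\ge 16$.
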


\begin{proof}
By \autoref{lem:orthogonal}, the circuit and cocircuit spaces of $M$
both equal $R(A)$. Equality of their inclusion-minimal nonempty supports
implies $\mathcal C(M)=\mathcal C^*(M)=\mathcal C(M^*)$.
Since the circuits determine a matroid, $M=M^*$.
Moreover, $r(M)=\operatorname{rank}A=\dim R(A)=12$.

For a circuit $C$ and $e\in C$, the set $C-\{e\}$ is independent.
Hence $|C|\leq r(M)+1=13$.
Since the incidence vector of $C$ belongs to $R(A)$,
\autoref{lem:orthogonal} and \autoref{lem:weight} imply
$8\leq|C|\leq13$ and $4\mid|C|$. These constraints restrict $|C|$ to $8$ or $12$.

Conversely, let $u\in R(A)$ have weight eight or twelve.
It remains to show that $\supp(u)$ is inclusion-minimal among the nonempty
supports in $R(A)$. Suppose that $0\ne v\in R(A)$ has
$\supp(v)\subsetneq\supp(u)$.
By \eqref{eq:binary-support},
$\supp(u+v)=\supp(u)-\supp(v)$.
The inclusion is proper, so $u+v\ne0$.
By \autoref{lem:weight}, the nonzero vectors $v$ and $u+v$ satisfy
$\wt(v)\geq8$ and $\wt(u+v)\geq8$. Since their supports partition
$\supp(u)$,
\[
 \wt(u)=\wt(v)+\wt(u+v)\geq16,
\]
contrary to $\wt(u)\in\{8,12\}$. Thus $\supp(u)$ is both a circuit and
a cocircuit.
\end{proof}

The characterization in \autoref{prop:circuits} reduces the intersection
problem to pairs of supports of vectors of weight eight or twelve. The proof first
excludes size ten and then exhibits pairs for all remaining possibilities.
\begin{proof}[Proof of \autoref{thm:main}]
Let $C\in\mathcal C(M)$ and $C^*\in\mathcal C^*(M)$, and let $u,v$ be
their incidence vectors, respectively. Write $s=|C\cap C^*|$.
By \autoref{prop:circuits}, $u,v\in R(A)$ and $|C|,|C^*|\in\{8,12\}$.
By \eqref{eq:self-orthogonal}, these incidence vectors satisfy
$0=u\cdot v=\sum_{e\in C\cap C^*}1$ in $\GF(2)$.
Thus, $s$ is even. Together with the bound
$s\leq\min\{|C|,|C^*|\}\leq12$, this parity condition implies
$s\in\{0,2,4,6,8,10,12\}$.

If $s=10$, then $|C|=|C^*|=12$.
The vector $u+v$ lies in $R(A)$ and has support $C\mathbin{\triangle}C^*$
by \eqref{eq:binary-support}. Therefore,
\[
 \wt(u+v)=|C\mathbin{\triangle}C^*|=|C|+|C^*|-2s=4.
\]
The nonzero vector $u+v$ contradicts the weight bound in
\autoref{lem:weight}, excluding $s=10$.

To realize the six remaining sizes, put $C_i=\supp(u_i)$ for
$i\in\{0,1,3\}$. From \eqref{eq:g} and \eqref{eq:row-coefficients},
\[
 C_i=\{i,i+1,i+5,i+6,i+7,i+9,i+11,23\}
 \qquad(i\in\{0,1,3\}).
\]
Since each $u_i\in R(A)$ has weight eight, \autoref{prop:circuits}
identifies $C_0,C_1,C_3$ as both circuits and cocircuits.
Inspection of these sets gives
\[
 C_0\cap C_3=\{9,23\},
 \qquad C_0\cap C_1=\{1,6,7,23\}.
\]
Set $C'=\supp(u_0+u_3)=C_0\mathbin{\triangle}C_3$ by
\eqref{eq:binary-support}. The values $|C_0|=|C_3|=8$ and
$|C_0\cap C_3|=2$ determine
\[
 |C'|=|C_0|+|C_3|-2|C_0\cap C_3|=12,
 \qquad |C_0\cap C'|=|C_0-C_3|=8-2=6.
\]
Since $u_0+u_3\in R(A)$ has weight $|C'|=12$, the set $C'$ is also both
a circuit and a cocircuit by \autoref{prop:circuits}.
Together with $|C_0\cap C_0|=8$
and $|C'\cap C'|=12$, the intersections above realize $s=2,4,6,8,12$.

To realize $s=0$, we use the complementary sets $C'$ and $E-C'$.
It remains to show that $E-C'$ is a cocircuit. Consider the all-one vector
$\mathbf1=(1,\ldots,1)\in V(24,2)$.
The even weights in $R(A)$ imply
$\mathbf1\cdot u=\sum_{e\in E}u_e=0$ in $\GF(2)$ for every $u\in R(A)$.
By \autoref{lem:orthogonal}, $\mathbf1\in R(A)^\perp=R(A)$.
By \eqref{eq:binary-support}, the vector $\mathbf1+u_0+u_3\in R(A)$
has support
\[
 \supp(\mathbf1+u_0+u_3)=E\mathbin{\triangle}C'=E-C'.
\]
Since $|E-C'|=24-|C'|=12$, \autoref{prop:circuits} identifies $E-C'$
as a cocircuit. The disjoint circuit-cocircuit pair $C',E-C'$ realizes $s=0$.
\end{proof}

\section*{Acknowledgements}
This work is supported by the Guangdong Basic and Applied Basic Research Foundation (Grant No. 2026A1515012237).


\begin{thebibliography}{9}

\bibitem{HuffmanPless2003}
W.~C.~Huffman and V.~Pless,
\emph{Fundamentals of Error-Correcting Codes},
Cambridge University Press, Cambridge, 2003.

\bibitem{Kingan1999}
S.~R.~Kingan,
Intersections of circuits and cocircuits in binary matroids,
\emph{Discrete Math.} \textbf{195} (1999), no.~1--3, 157--165.

\bibitem{KinganLemos2006}
S.~R.~Kingan and M.~Lemos,
On the circuit-cocircuit intersection conjecture,
\emph{Graphs Combin.} \textbf{22} (2006), no.~4, 471--480.

\bibitem{Oxley1984}
J.~G.~Oxley,
On the intersections of circuits and cocircuits in matroids,
\emph{Combinatorica} \textbf{4} (1984), no.~2--3, 187--195.

\bibitem{Oxley2011}
J.~Oxley,
\emph{Matroid Theory}, second ed.,
Oxford Graduate Texts in Mathematics, vol.~21,
Oxford University Press, Oxford, 2011.

\bibitem{Shin2021}
J.~Shin,
The circuit-cocircuit intersection conjecture for intersection size $k\leq6$,
preprint, 2021, arXiv:2111.02605.

\bibitem{Shin2023}
J.~Shin,
The circuit-cocircuit intersection conjecture for intersection size $\leq7$,
preprint, 2023, arXiv:2308.02859.
\end{thebibliography}
\end{document}